\theoremstyle{plain}
\newtheorem{thm}[subsection]{Theorem}
\newtheorem{prop}[subsection]{Proposition}
\newtheorem{cor}[subsection]{Corollary}
\newcommand{\Imm}{\operatorname{Im}}
\newcommand{\Ker}{\operatorname{Ker}}
\theoremstyle{definition}
\newtheorem{rk}[subsection]{\textmd{Remark}}
\newtheorem{definition}[subsection]{\textmd{Definition}}
\begin{document}
\title {Topology On BCK-Modules}

\author{Agha Kashif, M. Aslam} 
\address{Department of Mathematics,
University of Management and Technology , Lahore, Pakistan.\\
Email:kashif.arshad@lhr.nu.edu.pk}
\address{Department of Mathematics,
GC University, Lahore, Pakistan\\
Email: aslam298@hotmail.com}


\begin{abstract}

In this paper, we introduce the notion of a BCK-topological module in a natural way and establish that every decreasing sequence of submodules on a BCK-module $M$ over bounded commutative BCK-algebra $X$ is indeed a BCK-topological module. We have defined the notion of compatible and strict BCK-module homomorphisms, and establish that a strict BCK-module homomorphism is an open as well as a continuous mapping. Also, we establish the necessary and sufficient condition for a compatible mapping to be strict. 
\end{abstract}

\maketitle
Mathematics Subject Classification 2000 : {06F35}

Keywords : BCK-Modules, Baig Toplogy, Strict, Compatible.

\section{Introduction} \label{sec:intro}

A BCK-module is an action of BCK-algebras on a commutative group. It was introduced  by H.A.S. Abujabal, M. Aslam and A.B. Thaheem in \cite{AAT}. They showed in \cite{AAT} that every bounded implicative BCK-algebra forms a BCK-module over itself and developed the isomorphism theorems. Z. Perveen, M. Aslam and A.B. Thaheem introduced the notions of chains, injective and projective on BCK-modules in \cite{PAT}. I. Baig and M. Aslam in \cite{BA} introduced the notion of matrices of endomorphisms, topology over decreasing sequence of submodules of a BCK-module, BCK-module over polynomials, Artinian and Noetherian modules and discuss their properties. The theory was further explored in \cite{KA} by A. Kashif and M. Aslam. They constructed some new examples of BCK-modules in support of the theory, initiated the homology theory of BCK-modules and established that every short exact sequence of complexes admits an exact homology sequence of BCK-modules.

Let $(X,*,0)$ be a BCK-algebra and $M$ be an abelian group under addition $+$. Then $M$ is said to be an $X-$module if there exists a mapping $(a,m)\rightarrow am$ from $X \times M \rightarrow M$ satisfying the following conditions for all $a,b \in X$ and $m_1,m_2\in M$;
M1)  $(a\wedge b)m=a(bm)$, M2) $a(m_1+m_2)=am_1+am_2$, M3)  $0m=0$, where $a\wedge b=b*(b*a)$. If $X$ is bounded, then the following additional condition holds;
M4)  $1m=m$  . A right $X-$module can be defined similarly. Throughout this paper, by an
$X-$module $M$, we always mean a left BCK-module, unless stated otherwise.

A subgroup $N$ of an $X-$module $M$ is called submodule of $M$ if $N$ is also an $X-$module. Let $M_1,M_2$ be $X-$modules. A mapping $f:M_1\rightarrow M_2$ is called an $X-$homomorphism if for any $x\in X$ and $m_1,m_2\in M_1$ the following hold: 1)  $f(m_1+m_2)=f(m_1)+f(m_2)$,
2)  $f(xm_1)=xf(m_1)$. An $X-$homomorphism $f:M_1\rightarrow M_2$ which is both one to one as well as onto is called an $X-$isomorphism. The $\Ker f$ and $\Imm f$ , both in usual sense, are submodules of $M_1$ and $M_2$ respectively (see \cite{PAT}). Let $M$ be an $X-$module and $N$ be a submodule of $M$, the quotient group $M/N$ forms an $X-$module called the factor module admitted by the scalar multiplication $X\times(M/N)\rightarrow M/N$ defined by $(a,m+N)\rightarrow am+N$   $\forall a\in X, m\in M$ (see for details \cite{AAT,BA,PAT}). Some further extensions of BCK-modules can be seen in \cite{BSJ,HN}.

Here, we include some preliminaries from the theory of BCK-algebras. A BCK-algebra is an algebraic system $(X,*,0)$ that satisfies the following axioms for all $a,b,c\in X$ : BCK1)  $(a*b)*(a*c)(c*b)=0$,
BCK2)  $(a*(a*b))*b=0$, BCK3)  $a*a=0$, BCK4)  $0*a=0$, BCK5)  $a*b=0,b*a=0$ implies $a=b$ , BCK6)  $a*b=0$ iff $a\le b$. It can be observed that $(X,\le)$ forms a poset. In sequel, we denote the BCK-algebra $(X,*,0)$  simply by $X$. If $X$ contains an element $1$ such that $a\le 1$ for all $a$ in $X$, then $X$ is called bounded, $X$ is called commutative if $a\wedge b=b*(b*a)$ holds for all $a,b$ in $X$, whereas a BCK-algebra $X$ is said to be implicative if $a*(b*a)=a$ for all $a,b\in X$. We refer \cite{AT, II,IT,IT2} for undefined terms and more details of BCK-algebras.

Throughout this paper, whenever we refer $M$ as an $X-$module, we mean $M$ as module over bounded commutative BCK-algebra $X$, unless stated otherwise. For undefined terms of topological spaces see \cite{P,W}.

\section{BCK-Topological Modules}

In this section, we introduce the notion of a BCK-topological module in a natural way and establish that every Baig topology on a decreasing sequence of submodules of an $X-$module $M$ forces $M$ to be a BCK - topological module.\\
\begin{definition}\label{df-bt}\cite{BA} Let $M$ be a module over a bounded commutative BCK-algebra $X$ and
\begin{equation}
 \{M_n\vert M_n\subseteq M_{n+1}\}_{n\in \mathbb{Z}^+}
 \end{equation}
be a decreasing sequence of submodules ({\em dss}) of $M$. Then the collection
\begin{equation}\label{eq-bt}
\Im =\{V\subseteq M\vert \forall\; v\in V\exists\; n\in \mathbb{Z}^+\; \mbox{such that}\; v+M_n\subseteq V\}
\end{equation}
of subsets of $M$ forms a topology on $M$. This topology $\Im$  is referred as a Baig\footnote{ \framebox{Imran Baig} and M. Aslam defined in \cite{BA}, the topology on a decreasing sequence of submodules on $M$.} topology on $M$ admitted by the decreasing sequence (1) of its submodules.
\end{definition}

It was proved in \cite{BA} that every submodule $N$ of $M$ is both open and closed in $\Im$. Also it is well known from the study of topology that if a subset $A$ of a topology is both open and closed, then the characteristic function $\chi_A$ associated with such a subset is continuous (see \cite{P}). This leads to the following immediate corollary.

\begin{cor}
{\em Let $M$ be an $X-$module and $\Im$ be the Baig topology on $M$ admitted by the {\em dss} (1). If $N$ is a submodule of $M$, then the characteristic function $\chi_N$ is continuous in the Baig topology.}
\end{cor}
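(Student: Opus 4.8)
The plan is to unwind the definition of continuity of the map $\chi_N \colon M \to \{0,1\}$, where the two-point codomain is given its discrete topology (equivalently, the subspace topology inherited from $\mathbb{R}$, since any two-point subset of $\mathbb{R}$ is discrete). To verify continuity it suffices to check that the preimage under $\chi_N$ of each open subset of $\{0,1\}$ is open in the Baig topology $\Im$; and since $\{0,1\}$ is a finite discrete space, its open subsets are exactly $\emptyset$, $\{0\}$, $\{1\}$, and $\{0,1\}$.

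First I would record the four preimages. Clearly $\chi_N^{-1}(\emptyset) = \emptyset$ and $\chi_N^{-1}(\{0,1\}) = M$; both lie in $\Im$ — the empty set vacuously, and $M$ because for every $v \in M$ and any $n \in \mathbb{Z}^+$ one has $v + M_n \subseteq M$, so $M$ satisfies the membership condition in \eqref{eq-bt}. By definition of the characteristic function, $\chi_N^{-1}(\{1\}) = N$ and $\chi_N^{-1}(\{0\}) = M \setminus N$. Now I invoke the result of \cite{BA}, recalled in the paragraph preceding the statement, that every submodule $N$ of $M$ is simultaneously open and closed in $\Im$: openness of $N$ gives $N = \chi_N^{-1}(\{1\}) \in \Im$, and closedness of $N$ gives $M \setminus N = \chi_N^{-1}(\{0\}) \in \Im$. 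Hence the preimage of every open subset of $\{0,1\}$ is open, so $\chi_N$ is continuous.

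I do not expect a genuine obstacle: the corollary is an immediate consequence of the clopenness of submodules established in \cite{BA}, combined with the elementary topological fact that the characteristic function of a clopen set is continuous. The only point deserving a line of care is fixing the topology on the codomain — taking it discrete (equivalently, the Euclidean subspace topology) reduces the verification to the finite check above, and this is the sharpest reasonable formulation, since the discrete topology is already the finest possible on $\{0,1\}$.
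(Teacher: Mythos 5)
Your argument is correct and is essentially the one the paper intends: the corollary is derived from the clopenness of submodules established in \cite{BA} together with the standard fact that the characteristic function of a clopen set is continuous, which you simply verify explicitly by checking preimages of the four open subsets of the discrete two-point codomain. No discrepancy with the paper's (implicit) proof.
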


\begin{rk}
A topological space $X$ is connected iff $X$ and $\phi$ are its only open sets which are closed also (see \cite{W}). It was proved in \cite{BA} that a proper submodule $N$ of $M$ in the Baig topology $\Im$ is open iff it is closed. Indeed, one can conclude that in such case the Baig topology $\Im$ is not connected.
\end{rk}

\begin{prop}
{\em Let $M$ be an $X-$module and $\Im$ be the Baig topology on $M$ admitted by the {\em dss} (1). Then for any $m$ in $M$ the mappings,\\
\begin{description}
  \item[(i)] $\nu :M\to M$ defined by $\nu (m)=-m $
  \item[(ii)] for $a\in M, \tau_a:M\to M$ defined by $\tau_a(m)=a+m,$
\end{description}
is a homeomorphisms of $M$ onto itself.}
\end{prop}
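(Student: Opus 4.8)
The plan is to reduce the whole statement to the observation that $\Im$ has a convenient basis which is respected by both maps. First I would record the fact, only implicit in Definition~\ref{df-bt}, that each translate $v+M_n$ (with $v\in M$, $n\in\mathbb{Z}^+$) is itself $\Im$-open: if $w\in v+M_n$, write $w=v+x$ with $x\in M_n$; since $M_n$ is a submodule, hence a subgroup of $(M,+)$, we have $x+M_n=M_n$, so $w+M_n=v+x+M_n=v+M_n$, and \eqref{eq-bt} is satisfied. Because $0\in M_n$ gives $v\in v+M_n$, every $V\in\Im$ is the union of the translates it contains, so $\mathcal B=\{\,v+M_n : v\in M,\ n\in\mathbb{Z}^+\,\}$ is a basis for $\Im$. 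It then suffices, for each of the two maps $f$, to exhibit $f$ as a bijection of $M$ that carries $\mathcal B$ onto $\mathcal B$: such an $f$ is open, and since $f^{-1}$ is a map of the same kind it is open too, i.e.\ $f$ is continuous, so $f$ is a homeomorphism of $M$ onto itself.

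For (i): $\nu$ is an involution of the underlying set, $\nu\circ\nu=\mathrm{id}_M$, hence a bijection with $\nu^{-1}=\nu$. Using that $M_n$ is a subgroup, so $-M_n=M_n$, one computes $\nu(v+M_n)=-v+(-M_n)=(-v)+M_n\in\mathcal B$; since $\nu$ is its own inverse it carries $\mathcal B$ onto $\mathcal B$, and by the reduction above $\nu$ is a homeomorphism.

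For (ii): fix $a\in M$. From the group axioms of $(M,+)$, $\tau_a$ is a bijection with two-sided inverse $\tau_{-a}$. For a basic open set, $\tau_a(v+M_n)=(a+v)+M_n\in\mathcal B$, and likewise $\tau_{-a}(v+M_n)=(-a+v)+M_n\in\mathcal B$, so $\tau_a$ and $\tau_a^{-1}=\tau_{-a}$ both carry $\mathcal B$ onto $\mathcal B$; hence $\tau_a$ is a homeomorphism.

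I do not expect a genuine obstacle here: the one point that must not be skipped is the purely group-theoretic input that every submodule $M_n$ is a subgroup of $(M,+)$, so that $x+M_n=M_n$ for $x\in M_n$ and $-M_n=M_n$; this is exactly what makes the translates $v+M_n$ open (so that $\mathcal B$ is a basis) and what keeps $\mathcal B$ stable under $\nu$ and under every $\tau_a$. If one prefers to bypass the basis language, the same content can be delivered directly: for $V\in\Im$ and $w$ with $a+w\in V$, choose $n$ with $(a+w)+M_n\subseteq V$; then $w+M_n\subseteq\tau_a^{-1}(V)$, so $\tau_a^{-1}(V)\in\Im$, and the symmetric computation with $-a$ gives openness of $\tau_a$ — and analogously for $\nu$, replacing $a+w$ by $-w$ and using $-M_n=M_n$.
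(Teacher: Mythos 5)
Your proposal is correct and follows essentially the same route as the paper: both arguments rest on the base $\{v+M_n\}$ of $\Im$ and on the observation that $\nu$ and $\tau_a$ are bijections (with inverses $\nu$ and $\tau_{-a}$ of the same form) carrying basic open sets to basic open sets. The only difference is cosmetic --- you verify from Definition~\ref{df-bt} that the translates $v+M_n$ are open and form a basis, whereas the paper cites this fact from \cite{BA}.
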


\begin{proof}
   (i) It was shown in \cite{BA} that  $\mathfrak{B}=\{x+M_n :x\in M,n\in \mathbb{Z}^+\}$ forms a base for $\Im$. Indeed for each $U=x+M_n\in \mathfrak{B}$, there exists $V=-x+M_n$ such that $ \nu(V)=\nu(-x+M_n)=\nu(-x)+M_n=x+M_n=\textit{U}$. This shows that $\nu$  is continuous. Also from the definition, $\nu$ is  one to one, onto and satisfies $\nu^2(m)=\nu(-m)=m$ for all $m\in M$ which implies that $\nu={\nu}^{-1}$. This shows $\nu$ is also continuous and hence it is a homeomorphism.
 \\(ii) Next, we show that $\tau_a$ is a homeomorphism. Let $y=\tau_a(m)$ for some $m\in M$  and $U=y+M_n=\tau_a(m)+M_n \in \mathfrak{B}$ containing $y$. Then there exists $V=(-a+m)+M_n \in \mathfrak{B}$  such that $m\in V$  and $\tau_a(V)=\tau_a(a+m)=-a+(a+m)+M_n=m+M_n=U$. This shows that $\tau_a$ is continuous. It is clear from the definition of $\tau_a$ , it is surjective as well as injective. Now, for each  $a\in M,\tau_{-a}\tau_a(m)=\tau_{-a}(a+m)=-a+a+m=m$ for all $m\in M$. Similarly, $\tau_a\tau_{-a}(m)=m$ for all $m\in M$. Thus $\tau_a\tau_{-a}=\tau_{-a}\tau_a=I \Rightarrow \tau^{-1}_a=\tau_{-a}$. The continuity of $\tau_a$ implies the continuity of $\tau_{-a}$. Consequently, $\tau_a$ is an homeomorphism.
\end{proof}
\begin{prop}
{\em Let $M$ be an $X-$module and $\Im$ be the Baig topology on $M$ admitted by its {\em dss} (1) and suppose $M\times M$ be endowed with the product topology. Then the mapping $f:M\times M\to M$ defined by $f(m,m^\prime)=m+m^\prime$ for $m,m^\prime \in M $ is continuous.}
\end{prop}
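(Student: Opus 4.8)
The plan is to work directly with the base $\mathfrak{B}=\{x+M_n:x\in M,\ n\in\mathbb{Z}^+\}$ of $\Im$ exhibited in the proof of the previous proposition, together with the fact that the family $\{V\times W:V,W\in\mathfrak{B}\}$ is a base for the product topology on $M\times M$. Since a map into $M$ is continuous precisely when the preimage of every basic open set is open, and a subset of $M\times M$ is open precisely when it is a neighbourhood of each of its points, it suffices to show: for every $(m,m')\in M\times M$ and every basic open set $U=(m+m')+M_k$ containing $f(m,m')=m+m'$, there exist $V,W\in\mathfrak{B}$ with $m\in V$, $m'\in W$ and $f(V\times W)\subseteq U$.

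First I would make the natural choice $V=m+M_k$ and $W=m'+M_k$, so that trivially $m\in V$ and $m'\in W$. Then I would take an arbitrary $(v,w)\in V\times W$, write $v=m+a$ and $w=m'+b$ with $a,b\in M_k$, and compute $f(v,w)=v+w=(m+m')+(a+b)$. Since $M_k$ is a submodule of $M$, it is in particular a subgroup of $(M,+)$, so $a+b\in M_k$, and hence $f(v,w)\in(m+m')+M_k=U$. This gives $f(V\times W)\subseteq U$, from which the continuity of $f$ follows. (Equivalently, one could avoid passing to a base of the target and argue directly from the defining condition \eqref{eq-bt}: if $U\in\Im$ and $m+m'\in U$, pick $n$ with $(m+m')+M_n\subseteq U$ and note that $(m+M_n)\times(m'+M_n)$ is a product-open neighbourhood of $(m,m')$ contained in $f^{-1}(U)$.)

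I do not expect a genuine obstacle in this argument; the only points requiring care are the bookkeeping of bases — namely that it really does suffice to test continuity on basic open sets of $M$ and to produce a basic product-open neighbourhood of $(m,m')$ — and the observation that each $M_k$, being a submodule, is closed under addition, which is exactly what lets the translated submodule $M_k$ absorb the two ``error terms'' $a$ and $b$ simultaneously. Note that the nesting (decreasing) property of the sequence \eqref{eq-bt} is not even needed here: a single common index $k$ does the job.
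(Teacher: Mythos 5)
Your proof is correct and follows essentially the same route as the paper's: both arguments show that each point $(x,y)$ of the preimage of a basic open set $a+M_n$ admits the basic product neighbourhood $(x+M_n)\times(y+M_n)$ inside that preimage, the key point being that $M_n$, as a subgroup, absorbs the sum of the two error terms. Your write-up is in fact a little cleaner in how it organizes the bookkeeping, but there is no substantive difference in method.
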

\begin{proof}
Let $\mathfrak{B}=\{x+M_n:x\in M,n\in {\mathbb{Z}}^+\}$ be the base for $\Im$(see \cite{BA}) and consider a basic open set $a+M_n \in \mathfrak{B}$ and its inverse image $f^{-1}(a+M_n)$ in $M\times M$. If $f^{-1}(a+M_n)=\phi$, then result holds trivially. For nontrivial case, let
$f^{-1}(a+M_n)\neq\phi$ and $(x,y)\in f^{-1}(a+M_n)$. Therefore, $f(x,y)\in (a+M_n)\Rightarrow f(x,y)+M_n=a+M_n\Rightarrow (x+y)+M_n=a+M_n$. This implies$$ f^{-1}(a+M_n)=f^{-1}((x+y)+M_n).$$ \noindent It is easy to see that, for basic open sets $\left(x+M_{n} \right),\left(y+M_{n} \right)\in \mathfrak{B} $\\
\[\left(x+M_{n} \right)\times \left(y+M_{n} \right)\subseteq f^{-1} \left(x+y+M_{n} \right)\]
%
%
This implies $\left(x,y\right)\in  f^{-1} \left(a+M_{n} \right)$.
Hence $f^{-1} \left(a+M_{n} \right)$ is open. Consequently, $f$ is continuous. This completes the proof.
%
\end{proof}
\begin{prop}
{\em Let $M$ be an $X-$module and $\Im$ be the Baig topology on $M$ admitted by a {\em dss} (1) and suppose for $x\in X, \mu_x:M \to M$ be a mapping defined by
\begin{equation}
\mu_x(m)=xm.
\end{equation}
Then $\mu_x$ is continuous.}
\end{prop}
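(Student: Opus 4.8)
The plan is to check continuity on the standard base $\mathfrak{B}=\{y+M_n : y\in M,\ n\in\mathbb{Z}^+\}$ of $\Im$ (available from \cite{BA} and used in the preceding proofs): it suffices to show that $\mu_x^{-1}(a+M_n)$ is open for every basic open set $a+M_n\in\mathfrak{B}$. If $\mu_x^{-1}(a+M_n)=\phi$ the claim is trivial, so assume the preimage is nonempty and fix $m\in\mu_x^{-1}(a+M_n)$. Then $\mu_x(m)=xm\in a+M_n$, and since $M_n$ is a subgroup of $(M,+)$ this is the same as the coset identity $xm+M_n=a+M_n$.

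The key step is to verify that the basic open neighbourhood $m+M_n$ of $m$ (note $0\in M_n$, so $m\in m+M_n\in\mathfrak{B}$) is contained in $\mu_x^{-1}(a+M_n)$. Take any $t\in M_n$. By M2, $\mu_x(m+t)=x(m+t)=xm+xt$. Here is the one point that genuinely uses the module structure: $M_n$ is a submodule of $M$, hence stable under the action of $X$, so $xt\in M_n$. Therefore $\mu_x(m+t)=xm+xt\in xm+M_n=a+M_n$, i.e.\ $m+t\in\mu_x^{-1}(a+M_n)$. Thus $m+M_n\subseteq\mu_x^{-1}(a+M_n)$, and since $m$ was arbitrary, $\mu_x^{-1}(a+M_n)$ is open. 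As this holds for every basic open set, $\mu_x$ is continuous.

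I do not expect any real obstacle here: the whole argument rests on the elementary observation $xM_n\subseteq M_n$ (immediate from the definition of submodule) together with routine coset arithmetic in the abelian group $(M,+)$. It is worth noting that one may take the \emph{same} index $n$ for the neighbourhood of $m$, so the monotonicity of the sequence (1) is not actually needed; if one preferred, the analogous argument with any index $k$ satisfying $xM_k\subseteq M_n$ would work equally well.
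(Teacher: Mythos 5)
Your proof is correct and follows essentially the same route as the paper: both work with the base $\mathfrak{B}=\{y+M_n\}$ and reduce everything to the inclusion $\mu_x(m+M_n)\subseteq xm+M_n$, which comes from M2 together with $xM_n\subseteq M_n$. If anything, your version is slightly cleaner, since the paper goes on to assert the full equality $\mu_x(m+M_n)=xm+M_n$ (whose reverse inclusion is neither needed nor obviously true in general), whereas you correctly use only the one inclusion that continuity requires.
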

\begin{proof}
%
Let $y=\mu _{x} \left(m\right)\in M\;\; ;m\in M$ and $U=y+M_{n} \in \mathfrak{B}$ be a basic open set containing $y$. Then there exists $V\in \mathfrak{B}$ containing $m$ and satisfying $\mu _{x} \left(V\right)\subseteq U$. Indeed, for $U=y+M_{n} =xm+M_{n} \in \mathfrak{B}$ there exists $V=m+M_{n} \in \mathfrak{B}$ such that $\mu _{x} \left(V\right)=\mu _{x} \left(m+M_{n} \right)$.

\noindent Now for  $a\in \mu _{x} \left(m+M_{n} \right)$ it is easy to see that $a\in xm+M_{n} \, =\mu _{x} \left(m\right)+M_{n}$. This implies $\mu _{x} \left(m+M_{n}\right)\subseteq \mu _{x} \left(m\right)+M_{n}$.
Similarly,  $\mu _{x} \left(m\right)+M_{n} \subseteq \mu _{x} \left(m+M_{n} \right)$. This implies $\mu _{x} \left(m\right)+M_{n} =\mu _{x} \left(m+M_{n} \right)$. Therefore, $\mu _{x} \left(V\right)=xm+M_{n} \in \mathfrak{B}$ and hence $\mu _{x} \left(V\right)\subseteq U$.
This shows $\mu _{x} $ is continuous.
\end{proof}
\begin{rk}
From Proposition 2.5 and Proposition 2.6, it follows that the addition \enquote {+} and scalar multiplication are continuous under the Baig topology $\Im$. Therefore, a BCK-module is indeed a topological module under the Baig topology. This motivates us to define the following concept.
\end{rk}
\begin{definition}
An $X-$module $M$ over a bounded commutative BCK-algebra $X$ is said to be a BCK-topological module, if \enquote{+} and scalar multiplication are continuous under some suitable topology on $M$.
\end{definition}
\begin{rk}
Every Baig topology $\Im$ on a {\em dss} of an $X-$module $M$ over bounded commutative BCK-algebra $X$ force $M$ to be a BCK-topological module.
\end{rk}
It is well known from the study of normed spaces that a linear mapping in a normed space is continuous if and only if it is continuous at its $0$ element. The following proposition has the same spirit of motivation.
\begin{prop}\label{prp0}{\em
Let $M$ and $M^\prime$ be $X-$modules and $\Im$ and $\Im^\prime$ be the respective Baig topologies on them admitted by {\em dss} $\{M_n\vert M_n\subseteq M_{n+1}\}_{n\in \mathbb{Z}^+}$ and $\{M^\prime_n\vert M^\prime_n\subseteq M^\prime_{n+1}\}_{n\in \mathbb{Z}^+}$ respectively. Suppose $f:M\to M^\prime$ be an $X-$homomorphism. Then $f$ is continuous on $M$ if and only if it is continuous at the zero element of $M$.}
\end {prop}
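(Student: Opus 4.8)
The plan is to dispatch the forward implication immediately — if $f$ is continuous on all of $M$ it is in particular continuous at $0$ — and then concentrate on the converse. So I assume $f$ is continuous at the zero element $0_M$, fix an arbitrary $m\in M$, and aim to show $f$ is continuous at $m$; since $m$ is arbitrary, this gives continuity on $M$.

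The engine of the argument will be the translation homeomorphisms supplied by Proposition 2.4(ii): for $a\in M$ the map $\tau_a\colon M\to M$, $\tau_a(x)=a+x$, is a homeomorphism, and likewise the analogous translations $\tau'_b$ on $M'$ are homeomorphisms. Because $f$ is in particular an additive group homomorphism, we have $f(-x)=-f(x)$ and $f(a+x)=f(a)+f(x)$ for all $a,x\in M$; the latter is precisely the operator identity $f\circ\tau_a=\tau'_{f(a)}\circ f$. Applying this with $a=-m$ and composing on the left with $\tau'_{f(m)}$, and using $-f(m)+f(m)=0$, yields the factorization
\[
f=\tau'_{f(m)}\circ f\circ\tau_{-m}.
\]

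Continuity at $m$ is now read off from this factorization. The map $\tau_{-m}$ is continuous and carries $m$ to $0_M$; $f$ is continuous at $0_M$ by hypothesis (note $f(0_M)=0_{M'}$); and $\tau'_{f(m)}$ is continuous and carries $0_{M'}$ to $f(m)$. A composition of maps, each continuous at the point it is fed, is continuous at the initial point, so $f$ is continuous at $m$. Concretely, given a basic neighborhood $W=f(m)+M'_n$ of $f(m)$, the translate $\tau'_{-f(m)}(W)=M'_n$ is a neighborhood of $0_{M'}$, continuity of $f$ at $0_M$ furnishes a neighborhood $U$ of $0_M$ with $f(U)\subseteq M'_n$, and then $\tau_m(U)=m+U$ is a neighborhood of $m$ satisfying $f(m+U)=f(m)+f(U)\subseteq f(m)+M'_n=W$.

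The only step that needs care is the algebraic bookkeeping — confirming $f(-x)=-f(x)$ from the group-homomorphism axiom and that the conjugation identity $f\circ\tau_a=\tau'_{f(a)}\circ f$ holds exactly — after which the topology contributes nothing beyond Proposition 2.4. I do not anticipate any genuine obstacle; the content of the statement is really just the homogeneity of the Baig topology under translation.
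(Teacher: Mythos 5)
Your proof is correct and, once the abstract factorization $f=\tau'_{f(m)}\circ f\circ\tau_{-m}$ is unwound, your concrete final paragraph (translate the basic neighborhood $f(m)+M'_n$ back to $M'_n$, use continuity at $0$ to get $f(U)\subseteq M'_n$, then push forward to $f(m+U)\subseteq f(m)+M'_n$) is exactly the argument the paper gives, relying on the same two ingredients: additivity of $f$ and the homogeneity of the Baig topology under translation. The only cosmetic difference is that you package the step as a composition of the translation homeomorphisms of Proposition 2.4, whereas the paper works directly with basic open sets $m+M_n$.
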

\begin{proof}
The continuity of $f$ on $M$ trivially implies that it is continuous at zero. Conversely, assume that $f$ is continuous on zero, we will show that it is continuous at each $m\in M$.
\\Since $f$ is continuous at $0\in M$, therefore for arbitrary basic open set $M'_{k} \subseteq M'$ such that  $f\left(0\right)\in M'_{k}$, there exists a basic open set $M_{n}$ in $M'$ containing $0$ such that
\begin{equation}\label{eq1}
f\left(M_{n} \right)\subseteq M'_{k}.\end{equation}
Let $y=f\left(m\right)$ be an arbitrary element in $M'_{n}$ and $U=y+M'_{k} $ be a basic open set containing $y$.
Then there exists $V=m+M_{n} \in \mathfrak{B}$, such that the $X-$homomorphism of $f$ and eq. \ref{eq1} implies  $f\left(V\right)=f\left(m\right)+f\left(M_{n} \right)\subseteq f\left(m\right)+M'_{k} =U.$ Shows $f\left(V\right)\subseteq U$ and hence $f$ is continuous.
\end{proof}

\section{Compatible and Strict BCK-Module Homomorphisms}
In this section, we introduce the notions of compatible and strict BCK-module homomorphisms and explore their various features. We will show that every strict mapping is open as well as continuous. The notions of factor Baig topology and induced Baig topology will be introduced and explored. Finally, We will establish the necessary and sufficient condition for a compatible mapping to be strict.
\begin{definition}\label{df-cs}
Let  $M$ and $M^\prime$ be $X-$modules and $\Im$ and $\Im^\prime$ be the respective Baig topologies on them admitted by {\em dss} $\{M_n\vert M_n\subseteq M_{n+1}\}_{n\in \mathbb{Z}^+}$ and $\{M^\prime_n\vert M^\prime_n\subseteq M^\prime_{n+1}\}_{n\in \mathbb{Z}^+}$  respectively. Then an $X-$homomorphisms $f:M\to M^\prime$ is said to be \textbf{compatible} with the Baig topologies if
\begin{equation}\label{cmpt}
f(M_n)\subseteq M^\prime_n\;\;\forall n\in \mathbb{Z}^+
\end{equation}
and $f$ is said to be \textbf{strict} if
\begin{equation}\label{strc}
f(M_n)=f(M)\cap M^\prime_n\;\;\forall n\in \mathbb{Z}^+
\end{equation}
\end{definition}
\begin{prop}\label{st-cp}{\em
Let $\Im$ and $\Im^\prime$ be Baig topologies on $X-$modules $M$ and $M^\prime$ admitted by {\em dss} $\{M_n\vert M_n\subseteq M_{n+1}\}_{n\in \mathbb{Z}^+}$ and $\{M^\prime_n\vert M^\prime_n\subseteq M^\prime_{n+1}\}_{n\in \mathbb{Z}^+}$  respectively. Then an $X-$homomorphisms $f:M\to M^\prime$ is compatible if it is strict.}
\end{prop}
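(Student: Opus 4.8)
The plan is to show the set inclusion $f(M_n)\subseteq M^\prime_n$ follows directly from the strictness hypothesis, so the proof is essentially a one-line observation once the definitions are unpacked. Concretely, I would start from the defining equation of strictness, namely $f(M_n)=f(M)\cap M^\prime_n$ for all $n\in\mathbb{Z}^+$, and then observe that the right-hand side $f(M)\cap M^\prime_n$ is a subset of $M^\prime_n$ simply because an intersection is contained in each of its constituents.

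First I would fix an arbitrary $n\in\mathbb{Z}^+$. Second, I would invoke Definition \ref{df-cs}: since $f$ is strict, equation \eqref{strc} gives $f(M_n)=f(M)\cap M^\prime_n$. Third, I would note the trivial set-theoretic fact $f(M)\cap M^\prime_n\subseteq M^\prime_n$. Chaining these, $f(M_n)=f(M)\cap M^\prime_n\subseteq M^\prime_n$, which is precisely condition \eqref{cmpt}. Since $n$ was arbitrary, $f(M_n)\subseteq M^\prime_n$ for all $n\in\mathbb{Z}^+$, so $f$ is compatible.

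There is essentially no obstacle here; the statement is a formal consequence of the two definitions, and the only "content" is recognizing that strictness is a sharpening of compatibility (it pins down $f(M_n)$ exactly, whereas compatibility only bounds it above). I would present the argument in two or three sentences, being careful only to cite equations \eqref{cmpt} and \eqref{strc} by their labels and to state explicitly that the quantifier over $n$ carries through unchanged. One could optionally remark that the converse fails in general — an arbitrary compatible homomorphism need not satisfy $f(M_n)=f(M)\cap M^\prime_n$ — which is exactly why the later characterization of when a compatible map is strict is a nontrivial result worth proving; but for this proposition itself, no such subtlety arises.
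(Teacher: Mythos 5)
Your proposal is correct and follows exactly the paper's own argument: from strictness, $f(M_n)=f(M)\cap M^\prime_n\subseteq M^\prime_n$ for every $n$, which is compatibility. Nothing further is needed.
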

\begin{proof}
Let $f:M\to M'$ be strict. Then eq. (\ref{strc}) implies $f\left(M_{n} \right)=f\left(M\right)\cap M'_{n}\subseteq M'_{n}\,\; \forall\; n\in {\mathbb Z}^{+}$. Shows that $f$ is compatible.

%
%
\end{proof}
\begin{prop}\label{prp3}{\em
Let $M$ and $M^\prime$ be $X-$modules and $f:M\to M^\prime$ be an $X-$homomorphism. If $K$ is a submodule of $M$, then
$$f(K+m)=f(K)+f(m)\;\;\forall m\in M$$
}\end{prop}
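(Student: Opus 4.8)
The statement $f(K+m) = f(K) + f(m)$ for a submodule $K \subseteq M$ and $m \in M$ is purely a set-theoretic fact about the additive part of the homomorphism $f$; the submodule structure of $K$ and the BCK-action play no role beyond guaranteeing that $K$ is an additive subgroup (in fact any subset would do). The plan is therefore to prove the two inclusions separately using only axiom (1) of an $X$-homomorphism, namely $f(a+b) = f(a)+f(b)$.

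First I would show $f(K+m) \subseteq f(K)+f(m)$. Take an arbitrary element $y \in f(K+m)$. By definition of the image, $y = f(k+m)$ for some $k \in K$. Applying additivity of $f$ gives $y = f(k) + f(m)$, and since $f(k) \in f(K)$, we conclude $y \in f(K)+f(m)$. Conversely, for $f(K)+f(m) \subseteq f(K+m)$, take $z \in f(K)+f(m)$, so $z = f(k) + f(m)$ for some $k \in K$. Again by additivity, $z = f(k+m)$, and since $k+m \in K+m$, we get $z \in f(K+m)$. Combining the two inclusions yields the claimed equality, and since $m \in M$ was arbitrary the statement holds for all $m \in M$.

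There is essentially no obstacle here: the only point worth a moment's care is making sure the translate $K+m$ is interpreted as $\{k+m : k\in K\}$ and that $f(K)+f(m)$ means $\{f(k)+f(m): k\in K\}$, after which both directions are immediate from the single equation $f(k+m)=f(k)+f(m)$. The result is stated as a standalone proposition presumably because it will be invoked repeatedly in the subsequent analysis of the factor Baig topology and the induced Baig topology, where images of basic open sets $m + M_n$ under $f$ need to be rewritten as $f(m) + f(M_n)$.
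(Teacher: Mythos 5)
Your proposal is correct and follows essentially the same route as the paper: both inclusions are established element-by-element using only the additivity $f(k+m)=f(k)+f(m)$, exactly as in the paper's proof. Your observation that the submodule structure of $K$ is not actually needed is accurate but does not change the argument.
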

\begin{proof}
%
\noindent Let $m^\prime \in f(K+m)\Rightarrow m^\prime=f(k+m);k \in K$
$\Rightarrow m^\prime =f(k)+f(m)\Rightarrow m^\prime \in f(K)+f(m)\Rightarrow f(K+m)\subseteq f(K)+ f(m)$.
				
\noindent For reverse inclusion let $m^\prime \in f(K)+f(m)\Rightarrow m^\prime =f(k)+f(m);k \in K,\Rightarrow m^\prime=f(k+m)\Rightarrow m^\prime \in f(K+m)\Rightarrow f(K)+ f(m)\subseteq f(K+m) $
\\Hence $f(K+m)=f(K)+ f(m) \forall m \in M $ .
\end{proof}
\begin{prop}\label{st-op}{\em
Let $\Im$ and $\Im^\prime$ be Baig topologies on $X-$modules $M$ and $M^\prime$ admitted by {\em dss} $\{M_n\vert M_n\subseteq M_{n+1}\}_{n\in \mathbb{Z}^+}$ and $\{M^\prime_n\vert M^\prime_n\subseteq M^\prime_{n+1}\}_{n\in \mathbb{Z}^+}$  respectively. Then every strict $X-$homomorphisms $f:M\to M^\prime$ is an open mapping.
}\end{prop}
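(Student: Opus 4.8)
The plan is to reduce the claim to the action of $f$ on a base and then invoke strictness to identify the images of the basic sets. Recall that $\mathfrak{B}=\{x+M_n:x\in M,\ n\in\mathbb{Z}^+\}$ is a base for $\Im$ and, similarly, $\{y+M'_n:y\in M',\ n\in\mathbb{Z}^+\}$ is a base for $\Im'$. Since $f\left(\bigcup_i B_i\right)=\bigcup_i f(B_i)$ for any family of sets, and since every open subset of $M$ is a union of members of $\mathfrak{B}$, it suffices to show that $f(x+M_n)$ is open in $\Im'$ for each $x\in M$ and each $n\in\mathbb{Z}^+$; a union of open sets is then open.

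So fix $x\in M$ and $n\in\mathbb{Z}^+$. First I would apply Proposition \ref{prp3} with the submodule $K=M_n$ to write
\[
f(x+M_n)=f(x)+f(M_n).
\]
Next I would use the strictness hypothesis, equation (\ref{strc}), which gives $f(M_n)=f(M)\cap M'_n$. Both $f(M)=\Imm f$ and $M'_n$ are submodules of $M'$, and (as recorded following Definition \ref{df-bt}, see \cite{BA}) every submodule of a module equipped with its Baig topology is open; hence $f(M_n)=f(M)\cap M'_n$ is open in $\Im'$, being an intersection of two open subsets.

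It remains to pass from $f(M_n)$ to the translate $f(x)+f(M_n)$. Writing $\tau_{f(x)}:M'\to M'$ for the map $m'\mapsto f(x)+m'$, we have $f(x+M_n)=\tau_{f(x)}\bigl(f(M_n)\bigr)$, and Proposition 2.4 (applied to $M'$ and $\Im'$) tells us that $\tau_{f(x)}$ is a homeomorphism of $M'$ onto itself; therefore it carries the open set $f(M_n)$ onto an open set. This shows $f(x+M_n)\in\Im'$, and by the reduction above $f(U)\in\Im'$ for every $U\in\Im$, that is, $f$ is open.

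The calculations involved are routine; the single place where real content — and in particular the hypothesis \emph{strict} rather than merely \emph{compatible} — enters is the equality $f(M_n)=f(M)\cap M'_n$, which exhibits $f(M_n)$ as an intersection of two open submodule-type sets and hence as open. With only compatibility one obtains $f(M_n)\subseteq M'_n$, which is too weak to conclude openness, so in writing the argument I would be careful to flag this step and to apply Proposition \ref{prp3} and Proposition 2.4 to the target module $M'$ rather than to $M$.
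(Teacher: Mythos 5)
Your proof is correct and follows essentially the same route as the paper's: both reduce to a basic open set $x+M_n$, apply Proposition \ref{prp3} together with strictness to obtain $f(x+M_n)=f(x)+\left(f(M)\cap M'_n\right)$, and then argue that this translate of an open set is open. The only difference is cosmetic and lies in the final step --- the paper verifies the set identity $\left(f(M)\cap M'_n\right)+f(x)=f(M)\cap\left(M'_n+f(x)\right)$ and intersects the open set $f(M)$ with the basic open set $M'_n+f(x)$, whereas you invoke the translation homeomorphism $\tau_{f(x)}$ of Proposition 2.4 applied to $M'$ --- and both variants ultimately rest on the same fact recorded in the paper from \cite{BA}, namely that submodules are open in the Baig topology.
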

\begin{proof}
In order to show that $f$ is an open mapping it is sufficient to show for an arbitrary basic open set $m+M_{n} $ in $M$, $f\left(m+M_{n} \right)$ is an open set. From Proposition \ref{prp3} and the strictness of $f$, one arrives at $f\left(M_{n} +m\right)=\left(f\left(M\right)\bigcap M'_{n} \right)+f\left(m\right)$. Next we show that
%
$\left(f\left(M\right)\bigcap M'_{n} \right)+f\left(m\right)=f\left(M\right)\bigcap \left(M'+f\left(m\right)\right)$.

\noindent Let  $y\in \left(f\left(M\right)\cap M'_{n} \right)+f\left(m\right)$. Then there exists $m_1\in M$ such that $y=f\left(m_{1} \right)+f\left(m\right)=f\left(m_{1} +m\right)$. This implies $y\in f\left(M\right)$. Also it is evident from above that $y\in M_{n}^{'} +f\left(m\right)$. This implies $ y\in f\left(M\right)\cap \left(M_{n} ^{'} +f\left(m\right)\right)$ and hence $\left(f\left(M\right)\cap M_{n} ^{{'} } \right)+f\left(m\right)\subseteq f\left(M\right)\cap \left(M_{n} ^{{'} } +f\left(m\right)\right)$.
\\The reverse inclusion can be proved in a similar manner. Hence
$$\left(f\left(M\right)\cap M'_{n} \right)+f\left(m\right)=f\left(M\right)\cap \left(M'_{n} +f\left(m\right)\right)$$
%

\noindent Now, $f\left(M\right)$ is open in $M'$, since $f^{-1} \left(f\left(M\right)\right)=M$. Also $M'_{n} +f\left(m\right)$ is basic open set in $M'$, therefore, $f\left(M\right)\cap \left(M'_{n} +f\left(m\right)\right)$ is an open set being intersection of open sets.

Consequently, $f\left(M_{n} +m\right)$ is open. This completes the proof.

%
%
%
%
%
\end{proof}

\begin{prop}\label{cp-cnt}{\em
Let $\Im$ and $\Im^\prime$ be Baig topologies on $X-$modules $M$ and $M^\prime$ admitted by {\em dss} $\{M_n\vert M_n\subseteq M_{n+1}\}_{n\in \mathbb{Z}^+}$ and $\{M^\prime_n\vert M^\prime_n\subseteq M^\prime_{n+1}\}_{n\in \mathbb{Z}^+}$  respectively. If $f:M\to M^\prime$ is compatible with Baig topologies $\Im$ and $\Im^\prime$, then it is continuous everywhere.
}\end{prop}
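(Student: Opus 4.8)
The natural strategy is to invoke Proposition~\ref{prp0}, which reduces the problem to verifying continuity of $f$ at the single point $0\in M$. So the plan is: first observe that a basic open neighbourhood of $f(0)=0$ in $M'$ may be taken to be one of the submodules $M'_k$; then exhibit a basic open neighbourhood of $0$ in $M$ whose image lands inside $M'_k$; then conclude by Proposition~\ref{prp0}.

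For the first step, recall from \cite{BA} that $\mathfrak{B}=\{x+M'_n:x\in M',\,n\in\mathbb{Z}^+\}$ is a base for $\Im'$. If $y+M'_k\in\mathfrak{B}$ contains $0$, then $-y\in M'_k$, and since $M'_k$ is a submodule (hence a subgroup) this forces $y\in M'_k$ and therefore $y+M'_k=M'_k$. Thus every basic open neighbourhood of $0$ in $M'$ is of the form $M'_k$ for some $k\in\mathbb{Z}^+$; it suffices to handle these.

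For the second step, fix such an $M'_k$. Since $M_k\in\mathfrak{B}$ is a basic open set containing $0$ in $M$, and compatibility (eq.~(\ref{cmpt})) gives $f(M_k)\subseteq M'_k$, the set $V=M_k$ is a basic open neighbourhood of $0$ with $f(V)\subseteq M'_k$. This establishes continuity of $f$ at $0$, and Proposition~\ref{prp0} then upgrades this to continuity of $f$ on all of $M$.

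I do not anticipate a genuine obstacle here; the only point requiring a moment's care is the normalisation in the first step—namely that a basic neighbourhood of the identity in a Baig topology collapses to the corresponding submodule, which uses only that the $M'_n$ are subgroups. Everything else is an immediate consequence of the defining inclusion $f(M_n)\subseteq M'_n$ together with the already-proved Proposition~\ref{prp0}.
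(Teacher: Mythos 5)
Your proposal is correct and follows essentially the same route as the paper: reduce to continuity at $0$ via Proposition~\ref{prp0}, then use the compatibility inclusion $f(M_k)\subseteq M'_k$ to produce the required basic neighbourhood. Your extra normalisation step (showing any basic neighbourhood $y+M'_k$ of $0$ equals $M'_k$) is a small point of care that the paper leaves implicit.
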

\begin{proof}
The compatibility of $f:M\to M'$ implies $f\left(M_{n} \right)\subseteq M'_{n} \;\;\forall n\in {\mathbb Z}^{+}$. In order to show that $f$ is continuous everywhere, it is sufficient to show its continuity at zero.
%

\noindent Let $M'_{n} $ be an arbitrary basic open set containing $f\left(0\right)$. Then using compatibility of $f$, it follows that $0\in M_{n} \subseteq f^{-1} \left(M'_{n} \right)$. This shows $f$ is continuous at $0$ and hence by Proposition \ref{prp0} $f$ is continuous everywhere. This completes the proof.
%
%
%
%
%
%
\end{proof}
The following are immediate corollaries of the ongoing discussions.
\begin{cor}\label{st-cnt}{\em
Every strict mapping is continuous.}
\end{cor}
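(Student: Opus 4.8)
The plan is to obtain this corollary by simply composing the two implications already established in this section. By Proposition \ref{st-cp}, every strict $X$-homomorphism $f:M\to M^\prime$ is compatible with the Baig topologies $\Im$ and $\Im^\prime$; and by Proposition \ref{cp-cnt}, every $X$-homomorphism that is compatible with these Baig topologies is continuous everywhere. Chaining these two facts yields that a strict $f$ is continuous.

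In more detail, I would start from an arbitrary strict $X$-homomorphism $f:M\to M^\prime$, invoke Proposition \ref{st-cp} to record that $f(M_n)\subseteq M^\prime_n$ for all $n\in\mathbb{Z}^+$, and then quote Proposition \ref{cp-cnt} verbatim to conclude continuity. No new computation is needed: the containment $f(M_n)=f(M)\cap M^\prime_n\subseteq M^\prime_n$ coming from strictness is exactly the hypothesis required to enter Proposition \ref{cp-cnt}, whose proof in turn reduces continuity to continuity at $0$ via Proposition \ref{prp0}.

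Since the argument is purely a citation of earlier results, there is essentially no obstacle; the only point worth being careful about is making sure the two Baig topologies referenced in Corollary \ref{st-cnt} are the same ones appearing in Propositions \ref{st-cp} and \ref{cp-cnt}, i.e.\ that the ambient data (the decreasing sequences of submodules on $M$ and $M^\prime$) match, which they do by the standing conventions of this section. I would therefore present the proof in a single short paragraph: "Let $f:M\to M^\prime$ be strict. By Proposition \ref{st-cp}, $f$ is compatible with $\Im$ and $\Im^\prime$, and hence by Proposition \ref{cp-cnt}, $f$ is continuous everywhere."
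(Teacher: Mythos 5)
Your proposal is correct and matches the paper's own argument exactly: the paper also proves this by chaining Proposition \ref{st-cp} (strict implies compatible) with Proposition \ref{cp-cnt} (compatible implies continuous). Nothing further is needed.
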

\begin{proof}
The following diagram explains the proof.
$$Strictness\stackrel{Proposition\; \ref{st-cp}}{\Longrightarrow} Compatiblity \stackrel{Proposition\; \ref{cp-cnt}}{\Longrightarrow} Continuity$$
\end{proof}

\begin{cor}\label{stb-hom}{\em
If $f:M\to M^\prime$ is an $X-$module isomorphism, then $f$ is an homeomorphism provided it is strict.}
\end{cor}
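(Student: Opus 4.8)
The plan is to show that an $X$-module isomorphism $f:M\to M'$ which is strict is a homeomorphism, i.e.\ that both $f$ and $f^{-1}$ are continuous. The bijectivity is given, so the entire content is topological, and the key observation is that for bijective maps continuity and openness are dual: a bijection is a homeomorphism precisely when it is both continuous and open. Thus I would first invoke Corollary \ref{st-cnt} (every strict mapping is continuous) to conclude that $f$ is continuous, and then invoke Proposition \ref{st-op} (every strict $X$-homomorphism is an open mapping) to conclude that $f$ is open.

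Next I would combine these two facts. Since $f$ is a bijection, being an open map is equivalent to saying that $f^{-1}$ is continuous: for any open $U\subseteq M$, the preimage of $U$ under $f^{-1}$ is exactly $f(U)$, which is open by Proposition \ref{st-op}. Hence $f^{-1}:M'\to M$ is continuous. Together with the continuity of $f$ from Corollary \ref{st-cnt}, this gives that $f$ is a continuous bijection with continuous inverse, which is the definition of a homeomorphism.

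I do not anticipate any real obstacle here: the statement is essentially a bookkeeping corollary assembled from Proposition \ref{st-op} and Corollary \ref{st-cnt}, and the only thing to be slightly careful about is making explicit the elementary point that, for a bijection, openness of $f$ is the same as continuity of $f^{-1}$ (so that no separate argument about $f^{-1}$ being strict or compatible is needed). If one preferred, one could instead note that the inverse of an $X$-module isomorphism is again an $X$-module isomorphism and that strictness of $f$ forces $f^{-1}$ to be compatible, then apply Proposition \ref{cp-cnt} to $f^{-1}$; but the open-map route is shorter and cleaner. A one- or two-line proof, possibly accompanied by a small implication diagram in the style of Corollary \ref{st-cnt}, should suffice.
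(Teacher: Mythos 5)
Your proposal is correct and follows essentially the same route as the paper: invoke Corollary \ref{st-cnt} for continuity of $f$, Proposition \ref{st-op} for openness, and then use bijectivity to convert openness of $f$ into continuity of $f^{-1}$. Your version is actually slightly more careful than the paper's, since you make explicit the step (openness of a bijection $\Leftrightarrow$ continuity of its inverse) that the paper passes over silently.
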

\begin{proof}
Since $f$ is strict, therefore by Corollary \ref{st-cnt} $f$ is continuous. Also by Proposition \ref{st-op}, it follows that $f$ is an open mapping. Now $f$ and $f^{-1} $ are continuous and also \textit{f} is bijective.  This implies that  $f$ is an homeomorphism.
\end{proof}

\begin{prop}\label{prp-ibt}{\em
Let $M$ be an $X-$module and $\Im$ be a Baig topology on $M$ admitted by its {\em dss} $\{M_n\vert M_n\subseteq M_{n+1}\}_{n\in \mathbb{Z}^+}$. Then for any submodule $K$ of $M$ the sequence
\begin{equation}                            
\{K_n\vert K_n=K\cap M_n\}_{n\in \mathbb{Z}^+}
\end{equation} 				
admits a topology on $K$.
}\end{prop}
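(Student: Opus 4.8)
The plan is to reduce the statement to Definition~\ref{df-bt}: once we verify that $\{K_n\}_{n\in\mathbb{Z}^+}$ is a genuine decreasing sequence of submodules of $K$, the collection
\[
\Im_K=\{V\subseteq K\mid \forall\,v\in V\ \exists\,n\in\mathbb{Z}^+\ \text{such that}\ v+K_n\subseteq V\}
\]
is automatically a topology on $K$ by that definition applied with $K$ in the role of the ambient module, and nothing further needs to be argued.

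First I would check that each $K_n=K\cap M_n$ is a submodule of $K$. Since $K$ and $M_n$ are both submodules of the $X$-module $M$, their intersection is a subgroup of $M$ contained in $K$, and it is closed under the scalar action: for $x\in X$ and $k\in K\cap M_n$ we have $xk\in K$ because $K$ is a submodule and $xk\in M_n$ because $M_n$ is a submodule, hence $xk\in K_n$. Thus $K_n$ is an $X$-submodule of $K$ for every $n$.

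Next I would check that the chain is decreasing: from $M_n\subseteq M_{n+1}$ we immediately get $K\cap M_n\subseteq K\cap M_{n+1}$, that is, $K_n\subseteq K_{n+1}$ for all $n\in\mathbb{Z}^+$. Hence $\{K_n\mid K_n\subseteq K_{n+1}\}_{n\in\mathbb{Z}^+}$ is a {\em dss} of $K$ in exactly the sense required by Definition~\ref{df-bt}.

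Finally, invoking Definition~\ref{df-bt} with $K$ in place of $M$ and $\{K_n\}_{n\in\mathbb{Z}^+}$ in place of $\{M_n\}_{n\in\mathbb{Z}^+}$ shows that $\Im_K$ is a topology on $K$. No step here is a genuine obstacle; the only point requiring a little care is that the translates defining $\Im_K$ must be taken by the submodules $K_n$ of $K$, not by the $M_n$, which is precisely what the sequence displayed in the statement prescribes.
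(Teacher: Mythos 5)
Your proposal is correct and follows essentially the same route as the paper's own proof: verify that each $K_n=K\cap M_n$ is a submodule of $K$ and that the $K_n$ form a decreasing sequence, then invoke Definition~\ref{df-bt} with $K$ as the ambient module. Your version is in fact slightly more careful than the paper's (you explicitly check closure under the scalar action, and you flag the paper's $\subseteq$ versus $\supseteq$ inconsistency implicitly by working with the containment as stated), but the underlying argument is identical.
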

\begin{proof}
Since $K$ and $M_{n} $ are both submodules of $M\;\forall n$, therefore, $K_{n} =K\cap M_{n}\;\forall n$ is also a submodule of $M$ such that $K_{n} \supseteq K_{n+1}\;\forall n $. Indeed, the set $\left\{K_{n} \left|K_{n} =K\cap M_{n} \right. \right\}_{n\in {\mathbb Z}^{+} } $forms a {\em dss} of $K$. Let $\Im _{i} $ be a collection of subsets of $K$ defined as
\begin{equation} \label{eq-ibt}
\Im _{i} =\left\{U\subseteq K\left|u\in U\, \, \exists n\in {\mathbb Z}^{+} \, {\rm such\; that\; }u+K_{n} \subseteq U\right. \right\}.
\end{equation}
Then by Definition \ref{df-bt} $\Im _{i} $ forms a Baig topology on $K$.
%
%
%
\end{proof}
The topology $\Im _{i} $ introduced in eq. (\ref{eq-ibt}) is referred as Induced Baig topology on $K$. This leads to the following definition.
\begin{definition}\label{df-ibt}
Let $M$ be an $X-$module and $\Im$ be a Baig topology on $M$ admitted by its {\em dss} $\{M_n\vert M_n\subseteq M_{n+1}\}_{n\in \mathbb{Z}^+}$. Then for every submodule $K$ of $M$ with respective {\em dss} $\{K_n\vert K_n=K\cap M_n\}_{n\in \mathbb{Z}^+}$, the collection $\Im_i =\{U\subseteq K\vert \forall u\in U\exists n\in \mathbb{Z}^+ \;such\; that\; u+K_n\subseteq U\}$ of subsets of $K$ forms a topology on $K$ called the induced Baig topology on $K$.
\end{definition}
From the construction of the induced Baig topology on $K$, it follows that the topology has motivated its spirit from the usual relative topology on a subset. The following proposition explains this point.

\begin{prop}{\em
Let $\Im$ be a Baig topology on an $X-$module $M$ admitted by its {\em dss} $\{M_n\vert M_n\subseteq M_{n+1}\}_{n\in \mathbb{Z}^+}$. Then for any submodule $K$ of $M$ the Induced Baig topology $\Im_i$, is a relative topology on $K$ w.r.t. the Baig topology $\Im$.
}\end{prop}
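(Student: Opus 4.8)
The plan is to prove equality of the two topologies on $K$ by showing that each of them has a base consisting of sets that are open in the other. Recall from \cite{BA} that $\mathfrak{B}=\{x+M_n : x\in M,\ n\in\mathbb{Z}^+\}$ is a base for the Baig topology $\Im$ on $M$; consequently the relative topology on $K$ has base $\{K\cap(x+M_n) : x\in M,\ n\in\mathbb{Z}^+\}$. On the other hand, by Definition \ref{df-ibt} and the construction in Proposition \ref{prp-ibt}, the induced Baig topology $\Im_i$ has base $\mathfrak{B}_i=\{y+K_n : y\in K,\ n\in\mathbb{Z}^+\}$, where $K_n=K\cap M_n$.

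The first step would be to establish the coset identity
\[ y+K_n = K\cap(y+M_n)\qquad\text{for all } y\in K,\ n\in\mathbb{Z}^+. \]
For the inclusion $\subseteq$: since $K_n\subseteq M_n$ we have $y+K_n\subseteq y+M_n$, and since $K$ is a subgroup of $M$ with $y\in K$ and $K_n\subseteq K$ we also have $y+K_n\subseteq K$. For $\supseteq$: if $w\in K\cap(y+M_n)$, write $w=y+z$ with $z\in M_n$; then $z=w-y\in K$ because $w,y\in K$, so $z\in K\cap M_n=K_n$ and hence $w\in y+K_n$. This identity shows that each basic $\Im_i$-open set $y+K_n$ equals $K\cap U$ for some $U\in\mathfrak{B}$, so it is relatively open; therefore every set in $\Im_i$ is relatively open with respect to $\Im$.

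Next I would handle the reverse inclusion. Let $W$ be relatively open, say $W=K\cap U$ with $U\in\Im$, and pick $u\in W$. Since $u\in U$ and $U$ is open in $\Im$, there is $n\in\mathbb{Z}^+$ with $u+M_n\subseteq U$. As $u\in K$ we have $u+K_n\subseteq K$, and $u+K_n\subseteq u+M_n\subseteq U$, hence $u+K_n\subseteq K\cap U=W$. Because $u\in W$ was arbitrary, $W\in\Im_i$ by Definition \ref{df-ibt}. Combining the two inclusions gives $\Im_i=\{W\cap K : W\in\Im\}$, that is, $\Im_i$ is exactly the relative topology on $K$ with respect to $\Im$.

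I do not expect any genuine obstacle here; the only step needing a little care is the displayed coset identity $y+K_n=K\cap(y+M_n)$, which rests on $K$ being a subgroup of $M$ and on the definition $K_n=K\cap M_n$. The remainder is a routine comparison of the defining neighbourhood conditions for the Baig topology, the induced Baig topology, and the subspace topology.
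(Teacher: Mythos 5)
Your proof is correct, and its central step is the same as the paper's: the coset identity $y+K_n=K\cap(y+M_n)$ for $y\in K$, which identifies each basic open set of $\Im_i$ with the trace on $K$ of a basic open set of $\Im$. Where you go beyond the paper is in proving the second containment: the paper only verifies that the basic $\Im_i$-open sets are relatively open (hence $\Im_i$ is contained in the relative topology) and then concludes, whereas you also show that every relatively open set $K\cap U$ satisfies the defining neighbourhood condition of $\Im_i$, so that the two topologies actually coincide. That extra half of the argument is needed for the stated equality of topologies, so your version is the more complete one; the reasoning in both directions is sound, resting only on $K$ being a subgroup and on $K_n=K\cap M_n$.
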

\begin{proof}
\noindent Let $l\in K$ be an arbitrary element and consider $y\in l+K\cap M_{n}$. Then there exists some $m\in K\cap M_{n} $ such $y=l+m$. This implies $y\in K$. Also $m\in M_{n}$ implies $y=l+m\in l+M_{n} $. This shows that $y\in \left(l+M_{n} \right)\cap K$ and hence $l+K\cap M_{n} \subseteq \left(l+M_{n} \right)\cap K$. The reverse inclusion can be proved similarly. Therefore, one obtains the following:
$$\left(l+M_{n} \right)\cap K=l+K\cap M_{n} =l+K_{n} \;\; \because K_{i} =l+K_{i} \in \Im _{i}$$
%
%
%
Since $l+M_{n} $ is an open set in $\Im_i $, therefore  $\left(l+M_{n} \right)\cap K$ is open in relative topology on $K$. Therefore, basic open sets of  $\Im _{i} $ are open set of the relative topology on \textit{K}. Consequently, $\Im _{i} $ is a relative topology on $K$ w.r.t. the Baig topology $\Im $.
%
%
%
%
%
%
%
%
\end{proof}
\begin{prop}{\em
Let $\Im$ be a Baig topology on an $X-$module $M$ admitted by its {\em dss} $\{M_n\vert M_n\subseteq M_{n+1}\}_{n\in \mathbb{Z}^+}$. Then for any submodule $K$ of $M$ the sequence
\begin{equation}
\Big\{ \frac{M_n+K}{K}\vert n\in \mathbb{Z}^+\Big\}
\end{equation}
admits a topology on the factor $X-$module $\frac{M}{K}$.
}\end{prop}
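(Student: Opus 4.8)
The plan is to reduce the statement entirely to Definition \ref{df-bt}: once we know that $\{(M_n+K)/K\}_{n\in\mathbb{Z}^+}$ is a decreasing sequence of submodules of the factor $X$-module $M/K$, the assertion that it admits a topology is immediate. So the real content is the verification of two structural facts, neither of which I expect to present a genuine obstacle.

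First I would check that for each $n$ the set $M_n+K=\{m+k\mid m\in M_n,\ k\in K\}$ is a submodule of $M$ containing $K$. Since $M$ is abelian and $M_n,K$ are subgroups, $M_n+K$ is a subgroup; and for $a\in X$, $m\in M_n$, $k\in K$, axiom M2 gives $a(m+k)=am+ak\in M_n+K$ while M3 gives $0(m+k)=0$, so M1--M4 are inherited from $M$. Clearly $K=0+K\subseteq M_n+K$. Consequently, under the canonical projection $\pi:M\to M/K$ the image $\pi(M_n+K)=(M_n+K)/K$ is a submodule of the factor module $M/K$: it is a subgroup of $M/K$, and it is closed under the induced action $a(x+K)=ax+K$. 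This is precisely the submodule--correspondence already used implicitly when factor modules were introduced in the preliminaries (cf.\ \cite{AAT,PAT}). Then I would verify monotonicity: from the nesting $M_n\subseteq M_{n+1}$ of the given {\em dss} one gets $M_n+K\subseteq M_{n+1}+K$, and applying $\pi$ yields $(M_n+K)/K\subseteq (M_{n+1}+K)/K$ for all $n$. Hence $\{(M_n+K)/K\}_{n\in\mathbb{Z}^+}$ is a decreasing sequence of submodules of $M/K$ in the sense of Definition \ref{df-bt}.

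Finally, applying Definition \ref{df-bt} to the $X$-module $M/K$ equipped with this {\em dss}, the collection
\[ \Im_f=\Big\{U\subseteq M/K \ \Big|\ \forall\, u\in U\ \exists\, n\in\mathbb{Z}^+\ \text{such that}\ u+(M_n+K)/K\subseteq U\Big\} \]
forms a topology on $M/K$, which one may call the factor Baig topology on $M/K$. The only steps requiring any care are the closure of $M_n+K$ under addition and scalar multiplication (handled by M2--M3) and the passage to the quotient (handled by the factor-module construction of the preliminaries); everything else is formal, so I do not anticipate a hard part.
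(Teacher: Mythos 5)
Your proposal is correct and follows essentially the same route as the paper's proof: show $M_n+K$ is a submodule of $M$ containing $K$, pass to the quotient to get a submodule $(M_n+K)/K$ of $M/K$, observe the nesting is inherited, and then invoke Definition \ref{df-bt}. The only difference is that you verify the submodule closure under M2--M3 explicitly where the paper simply asserts it, and you state the nesting as $M_n\subseteq M_{n+1}$ (matching the statement's notation) while the paper's proof writes $M_n\supseteq M_{n+1}$ --- an inconsistency internal to the paper, not a defect of your argument.
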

\begin{proof}
Since $M_{n} +K$ is a submodule of $M$, therefore, $\frac{M_{n} +K}{K} $ is a submodule of the factor module $\frac{M}{K} $. Also $M_{n} \supseteq M_{n+1} \;\forall n\in {\mathbb Z}^{+} $, therefore, $ M_{n} +K\supseteq M_{n+1} +K\; \forall n\in {\mathbb Z}^{+} $. This implies $ \frac{M_{n} +K}{K} \supseteq \frac{M_{n+1} +K}{K} \; \forall n\in {\mathbb Z}^{+}$. Therefore, $\left\{\frac{M_{n} +K}{K} \left|n\in {\mathbb Z}^{+} \right. \right\}$ forms a {\em dss} of the factor $X-$module $\frac{M}{K} $. Let $\Im _{f} $ defines a collection of subsets of $\frac{M}{K} $ as below:
\begin{equation} \label{eq-fbt}
\Im _{f} =\left\{\frac{N}{K} \subseteq \frac{M}{K} \left|\;\forall \bar{U}\in \frac{N}{K}\; \exists\; \, n\in {\mathbb Z}^{+} \, \, {\rm such\; that\; \; }\bar{U}+\frac{M_{n} +K}{K} \subseteq \frac{N}{K} \right. \right\}.
\end{equation}
Then by Definition \ref{df-bt} $\Im _{f} $ forms a topology on $\frac{M}{K} $.
%
%
\end{proof}
The above proposition leads to the following definition.
\begin{definition}
Let $M$ be an $X-$module and $\Im$ be a Baig topology on $M$ admitted by its {\em dss} $\{M_n\vert M_n\subseteq M_{n+1}\}_{n\in \mathbb{Z}^+}$. Then for any submodule $K$ of $M$  and the {\em dss} $\{ \frac{M_n+K}{K}\vert n\in \mathbb{Z}^+\}$ of its factor module $\frac{M}{K}$, the collection of subsets $\Im_f =\{U\subseteq K\vert \;\forall \;u\in U\;\exists\; n\in \mathbb{Z}^+ \mbox{such that}\; u+K_n\subseteq U\}$ admits a topology on $\frac{M}{K}$ called the factor Baig topology on $M$ w.r.t. $K$.
\end{definition}

\begin{prop}{\em
Let $M$ be an $X-$module. Then for any submodule $K$  and respective factor $X-$module $M/K$ following hold:
\begin{description}
\item[a)] The sequence
\begin{equation}\label{esq}
K\stackrel{i}\longrightarrow M \stackrel{f}\longrightarrow M/K
\end{equation}
where $i(k)=k;\;k\in K$ is the inclusion mapping and $f(m)=m+K;\;m\in M$ is the natural $X-$homomorphism, is an exact sequence.
\item [b)] The inclusion mapping $i$ and the natural $X-$homomorphism $f$ are strict if $K,M$ and $M/K$ are equipped with their respective Baig topologies $\Im_i,\Im$ and $\Im_f$ (c.f. eqs. (\ref{eq-bt},\ref{eq-ibt},\ref{eq-fbt})).
\end{description}
}\end{prop}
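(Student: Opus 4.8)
The plan is to treat the two parts separately; each reduces to unwinding the definitions of the inclusion map, of the natural $X$-homomorphism, and of the three Baig topologies involved. For part (a), I would first record that $i$ is trivially an $X$-homomorphism, that $f(m)=m+K$ is the natural $X$-homomorphism recalled in Section \ref{sec:intro}, that $i$ is injective, and that $f$ is surjective (every coset $m+K$ equals $f(m)$). The content is exactness at $M$: I would compute $\Imm i = i(K) = K$ on the one hand, and $\Ker f = \{m\in M : m+K = K\} = \{m\in M : m\in K\} = K$ on the other; hence $\Imm i = \Ker f$ and the sequence (\ref{esq}) is exact.

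For part (b), strictness of $i$ is immediate from how the induced {\em dss} on $K$ was chosen: since $K_n = K\cap M_n$ by definition and $i$ is the inclusion, $i(K_n) = K_n = K\cap M_n = i(K)\cap M_n$ for every $n\in\mathbb{Z}^+$, which is exactly condition (\ref{strc}). For $f$, the key step is to identify $f(M_n)$ with the $n$-th term $\frac{M_n+K}{K}$ of the {\em dss} on $M/K$: since $f(m)=m+K$, we have $f(M_n) = \{\,m+K : m\in M_n\,\}$, and this equals $\frac{M_n+K}{K}$ because any coset $(m_n+k)+K$ with $m_n\in M_n$, $k\in K$ coincides with $m_n+K\in f(M_n)$ (one may also read this off from Proposition \ref{prp3} applied to the quotient map). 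Since $\frac{M_n+K}{K}\subseteq \frac{M}{K} = f(M)$, intersecting with $f(M)$ is harmless, so $f(M_n) = f(M)\cap\frac{M_n+K}{K}$ for all $n$, i.e.\ (\ref{strc}) holds for $f$ as well.

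There is no real obstacle here: the argument is pure bookkeeping, and the only point needing a moment's care is the set equality $f(M_n)=\frac{M_n+K}{K}$, whose nontrivial inclusion $\frac{M_n+K}{K}\subseteq f(M_n)$ must be justified as above. Indeed, the induced and factor Baig topologies were defined (Definition \ref{df-ibt} and eq.\ (\ref{eq-fbt})) precisely so that $i$ and $f$ come out strict, so no deeper input is required. As a bonus, once strictness is in hand, Proposition \ref{st-op} and Corollary \ref{st-cnt} immediately yield that $i$ and $f$ are open and continuous.
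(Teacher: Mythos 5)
Your proof is correct and follows essentially the same route as the paper's: exactness at $M$ via $\Imm i = K = \Ker f$, strictness of $i$ from $i(K_n)=K\cap M_n=i(K)\cap M_n$, and strictness of $f$ from the identification $f(M_n)=\frac{M_n+K}{K}$ together with $\frac{M_n+K}{K}\subseteq f(M)=\frac{M}{K}$. The only difference is that you spell out the inclusion $\frac{M_n+K}{K}\subseteq f(M_n)$, which the paper dismisses as "easy to see".
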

\begin{proof} \textbf{a):} The inclusion mapping $i$ and the mapping $f$ are indeed natural homomorphism such that $\Imm i=K=\Ker f$. Therefore, diagram (\ref{esq}) forms an exact sequence.

\noindent\textbf{b):} Since $K_n=K\cap M_n$ by Definition \ref{df-ibt}, therefore, $i\left(K_{n} \right)=i\left(K\cap M_{n} \right)\, =K\cap M_{n} =i\left(K\right)\cap M_{n}$. Hence by eq. (\ref{strc}), $i$ is strict.

 Next, we will show that $f$ is strict. Since $f\left(M\right)=\frac{M}{K}$ and the fact that $M_{n} +K\subseteq M$ therefore, $f\left(M\right)\cap \frac{M_{n} +K}{K} =\frac{M_{n} +K}{K} $. Also it is easy to see that $f\left(M_{n} \right)=\frac{M_{n} +K}{K}$. Hence $f\left(M\right)\cap \frac{M_{n} +K}{K} =f\left(M_{n} \right)$. Therefore by eq. (\ref{strc}), $f$ is strict.
%
%
%
%
%

\end{proof}

\begin{cor}{\em
$i$ and $f$ are open mappings.}
\end{cor}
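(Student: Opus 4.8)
The plan is to read this off immediately from the two results just established. The previous proposition shows that, when $K$, $M$ and $M/K$ carry their respective Baig topologies $\Im_i$, $\Im$ and $\Im_f$, the inclusion $i\colon K\to M$ and the natural homomorphism $f\colon M\to M/K$ are both strict $X$-homomorphisms. Proposition~\ref{st-op} asserts that every strict $X$-homomorphism between $X$-modules equipped with Baig topologies is an open mapping. So the skeleton of the argument is: (1) recall strictness of $i$ and $f$ from part (b) of the preceding proposition; (2) check that the hypotheses of Proposition~\ref{st-op} are genuinely in force, namely that $\Im_i$ and $\Im_f$ are Baig topologies admitted by the displayed {\em dss}'s $\{K_n\mid K_n=K\cap M_n\}$ and $\{(M_n+K)/K\mid n\in\mathbb{Z}^+\}$ — this was exactly the content of Proposition~\ref{prp-ibt} and of the proposition constructing $\Im_f$; (3) apply Proposition~\ref{st-op} twice, once to $i$ and once to $f$, to conclude that each carries basic open sets to open sets, hence is open.

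First I would state that $i$ and $f$ are strict, citing the preceding proposition, so that no recomputation of $i(K_n)=i(K)\cap M_n$ or $f(M_n)=f(M)\cap\big((M_n+K)/K\big)$ is needed. Then I would simply invoke Proposition~\ref{st-op}: for an arbitrary basic open set $k+K_n$ of $\Im_i$, strictness of $i$ gives that $i(k+K_n)$ is open in $\Im$, and likewise for a basic open set of $\Im$ its image under $f$ is open in $\Im_f$. Since a map sending every member of a base to an open set is open, both $i$ and $f$ are open mappings.

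There is essentially no obstacle here; the corollary is a formal consequence of the two cited results. The only point that deserves a word of care is that Proposition~\ref{st-op} is phrased for Baig topologies, so one must make explicit that the induced and factor topologies on $K$ and $M/K$ are indeed Baig topologies (this is precisely why the earlier propositions verified that the relevant sequences are decreasing sequences of submodules). Once that is noted, the conclusion follows in a line.

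\begin{proof}
By part (b) of the preceding proposition, the inclusion $i\colon K\to M$ and the natural $X$-homomorphism $f\colon M\to M/K$ are strict when $K$, $M$ and $M/K$ are equipped with the Baig topologies $\Im_i$, $\Im$ and $\Im_f$ admitted, respectively, by the {\em dss} $\{K_n\mid K_n=K\cap M_n\}_{n\in\mathbb{Z}^+}$, the {\em dss} (1), and the {\em dss} $\{(M_n+K)/K\mid n\in\mathbb{Z}^+\}$. Hence Proposition~\ref{st-op} applies to each of them, and a strict $X$-homomorphism is an open mapping. Therefore $i$ and $f$ are open mappings.
\end{proof}
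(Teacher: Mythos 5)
Your proof is correct and follows exactly the paper's route: the paper likewise disposes of this corollary by noting it is an immediate consequence of Proposition~\ref{st-op} applied to the strict maps $i$ and $f$ from part (b) of the preceding proposition. Your extra remark that $\Im_i$ and $\Im_f$ must be verified to be Baig topologies is a reasonable point of care but does not change the argument.
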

\begin{proof}
An immediate implication of Proposition \ref{st-op}.
\end{proof}

\begin{thm}\label{thm1}{\em
Let $\Im$ and $\Im^ \prime$ be Baig topologies on $X-$modules $M$ and $M^ \prime$ admitted by {\em dss} $\{M_n\vert M_n\subseteq M_{n+1}\}_{n\in \mathbb{Z}^+}$ and $\{M^\prime_n\vert M^\prime_n\subseteq M^\prime_{n+1}\}_{n\in \mathbb{Z}^+}$  respectively. If  $f:M\to M^\prime$ is a compatible $X-$homomorphism, then for all $n$, $f$ induces and $X-$homomorphism $f_n :\frac{M}{M_n}\to \frac{M^\prime}{M^\prime_n}$ such that the diagram
\begin{equation}\label{d1}
\begin{tikzcd}
M\arrow{r}{f}\arrow{d}{\phi_n}
&M^\prime\arrow{d}{\phi^\prime_n}\\
{\frac{M}{M_n}}\arrow{r}{f_n}&{\frac{M^\prime}{M^\prime_n}}
\end{tikzcd}
\end{equation}
commutes, where $\phi_n$ and $\phi^\prime_n$ are natural $X-$homomorphisms from $M$ onto $M/M_n$ and $M^\prime$ onto $M^\prime/M^\prime_n$ respectively.
}\end{thm}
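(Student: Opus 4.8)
The plan is to construct $f_n$ by the usual universal property of quotients and then verify commutativity is automatic. First I would define $f_n : \frac{M}{M_n} \to \frac{M^\prime}{M^\prime_n}$ by the rule $f_n(m + M_n) = f(m) + M^\prime_n$ for all $m \in M$. The immediate task is to show this is \emph{well-defined}: if $m + M_n = m_1 + M_n$, then $m - m_1 \in M_n$, and since $f$ is compatible, $f(M_n) \subseteq M^\prime_n$ by eq.~(\ref{cmpt}), so $f(m) - f(m_1) = f(m - m_1) \in f(M_n) \subseteq M^\prime_n$, whence $f(m) + M^\prime_n = f(m_1) + M^\prime_n$. (Here I use that $f$ is a group homomorphism, so $f(m-m_1) = f(m) - f(m_1)$.) This well-definedness check is the only place compatibility is actually needed, and it is the crux of the statement, though it is short.

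Next I would check that $f_n$ is an $X$-homomorphism. Additivity follows from additivity of $f$: $f_n((m+M_n) + (m_1 + M_n)) = f_n((m+m_1)+M_n) = f(m+m_1) + M^\prime_n = (f(m)+f(m_1)) + M^\prime_n = f_n(m+M_n) + f_n(m_1+M_n)$. For the scalar action, recall the factor module structure is $a(m + M_n) = am + M_n$; then $f_n(a(m+M_n)) = f_n(am + M_n) = f(am) + M^\prime_n = af(m) + M^\prime_n = a f_n(m + M_n)$, using $f(am) = af(m)$ since $f$ is an $X$-homomorphism.

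Finally, commutativity of diagram~(\ref{d1}) is a direct computation: for any $m \in M$, the two composites give $(f_n \circ \phi_n)(m) = f_n(m + M_n) = f(m) + M^\prime_n$ and $(\phi^\prime_n \circ f)(m) = \phi^\prime_n(f(m)) = f(m) + M^\prime_n$, which agree; hence $\phi^\prime_n \circ f = f_n \circ \phi_n$. I would also note in passing that $\phi_n$ and $\phi^\prime_n$ are the natural surjections as in the paragraph preceding Definition~\ref{df-bt}, so no further justification of their homomorphism property is required.

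The main obstacle — such as it is — is entirely concentrated in the well-definedness step, and that obstacle dissolves the moment one invokes the compatibility hypothesis $f(M_n) \subseteq M^\prime_n$; everything else is routine diagram-chasing with the quotient structures. I would present the proof in the order: define $f_n$, verify well-definedness (citing eq.~(\ref{cmpt})), verify the $X$-homomorphism axioms, and conclude with the commutativity computation.
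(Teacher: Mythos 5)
Your proposal is correct and follows essentially the same route as the paper's own proof: define $f_n(m+M_n)=f(m)+M'_n$, use compatibility $f(M_n)\subseteq M'_n$ for well-definedness, and verify commutativity by direct computation. Your version is in fact slightly more complete, since you explicitly check the $X$-homomorphism axioms where the paper merely asserts them.
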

\begin{proof}
Let $f_n :\frac{M}{M_n}\to \frac{M^\prime}{M^\prime_n}$ be defined in a natural way as $f_{n} \left(m+M_{n} \right)=f\left(m\right)+M'_{n} \; ;m\in M$. First we will establish that $f_n$ is well defined. \\Let   $\bar{m}_{1} =m_{1} +M_{n} {\rm ,\; }\bar{m}_{2} =m_{2} +M_{n} \in \frac{M}{M_n}$ such that $\bar{m}_{1} =\bar{m}_{2} $. Then $m_{1} -m_{2} \in M_{n}$ and therefore, $f\left(m_{1} -m_{2} \right)\in f\left(M_{n} \right)\subseteq M'_{n}$ since $f$ is compatible. The $X-$homomrophism of $f$ implies that $f\left(m_{1} \right)+M'=f\left(m_{2} \right)+M'$ and hence $f_{n} \left(\bar{m}_{1} \right)=f_{n} \left(\bar{m}_{2} \right)$. This shows that $f$ is well defined. Also the natural induction of $f_n$ from the $X-$ homomorphism $f$ implies that it is also an $X-$homomorphim.

\noindent The only thing remains to establish is that diagram (\ref{d1}) commutes, i.e., $\phi '_{n} f=f_{n} \phi _{n} $.
\noindent Let $m\in M$. Then
$f_{n} \phi _{n} \left(m\right)=f_{n} \left(\phi _{n} \left(m\right)\right)$. Also $\phi _{n} \left(m\right)=m+M_{n}$ implies that $f_{n} \phi _{n} \left(m\right)=f_{n} \left(m+M_{n} \right)=f\left(m\right)+M'_{n}=\phi '_{n} \left(f\left(m\right)\right)=\phi '_{n} f\left(m\right)\;\;\forall m\in M$. Hence $f_{n} \phi _{n} =\phi '_{n} f$. This completes the proof.
\end{proof}

\begin{cor}\label{crthm1}{\em
Consider the notation used in the Theorem \ref{thm1} and the diagram (\ref{d1}).
Then for each $n\in \mathbb{Z}^+$, there exists an $X-$homomorphism $\alpha_n:Kerf \to Kerf_n$.
}\end{cor}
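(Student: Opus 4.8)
The plan is to realize $\alpha_n$ as a co/restriction of the natural projection $\phi_n\colon M\to M/M_n$. Concretely, I would define
\[
\alpha_n\colon \Ker f\to \Ker f_n,\qquad \alpha_n(m)=\phi_n(m)=m+M_n,
\]
and then all that remains is to check that this prescription is meaningful and functorial: that $\alpha_n(m)$ genuinely lands in $\Ker f_n$, and that the resulting map respects addition and the $X$-action.

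First I would check the target. Note that $\Ker f$ is a submodule of $M$ and $\Ker f_n$ is a submodule of $M/M_n$ (kernels of $X$-homomorphisms are submodules, as recalled in Section~\ref{sec:intro}), so both source and target of $\alpha_n$ are $X$-modules. Fix $m\in\Ker f$, i.e. $f(m)=0$ in $M'$. Using the commutativity of diagram (\ref{d1}) from Theorem \ref{thm1}, namely $f_n\phi_n=\phi'_nf$, I get
\[
f_n\bigl(\alpha_n(m)\bigr)=f_n\bigl(\phi_n(m)\bigr)=\phi'_n\bigl(f(m)\bigr)=\phi'_n(0)=M'_n,
\]
which is precisely the zero of $M'/M'_n$. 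Hence $\alpha_n(m)\in\Ker f_n$, so $\alpha_n$ is well defined as a map $\Ker f\to\Ker f_n$.

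Then I would verify that $\alpha_n$ is an $X$-homomorphism. Since $\phi_n$ is an $X$-homomorphism, for $m_1,m_2\in\Ker f$ and $x\in X$ we have $\phi_n(m_1+m_2)=\phi_n(m_1)+\phi_n(m_2)$ and $\phi_n(xm_1)=x\phi_n(m_1)$ in $M/M_n$; restricting attention to $\Ker f$ (and noting these equalities take place inside the submodule $\Ker f_n$) yields $\alpha_n(m_1+m_2)=\alpha_n(m_1)+\alpha_n(m_2)$ and $\alpha_n(xm_1)=x\alpha_n(m_1)$, as required.

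I do not expect a genuine obstacle here; the construction is essentially forced once Theorem \ref{thm1} is in hand. The only point that deserves a word of care is the very first one — confirming that $\Ker f_n$ is a submodule of $M/M_n$ so that the codomain of $\alpha_n$ makes sense — and this is immediate from the general fact that kernels of $X$-homomorphisms are submodules, applied to $f_n\colon M/M_n\to M'/M'_n$. Equivalently, one may simply describe $\alpha_n$ as the unique map making the square formed by $\phi_n$, the inclusion $\Ker f\hookrightarrow M$, and the inclusion $\Ker f_n\hookrightarrow M/M_n$ commute.
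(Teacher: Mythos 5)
Your proposal is correct and follows essentially the same route as the paper: define $\alpha_n$ as the restriction of $\phi_n$ to $\Ker f$, use the commutativity $f_n\phi_n=\phi'_n f$ from Theorem \ref{thm1} to see that $\phi_n(k)$ lands in $\Ker f_n$, and inherit the $X$-homomorphism property from $\phi_n$. Your added remarks (that $\Ker f_n$ is a submodule, and the explicit check of additivity and $X$-linearity) only make explicit what the paper leaves implicit.
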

\begin{proof}
Let $k\in \Ker f$. Then $f\left(k\right)=0\in M'_{n} \; \forall n$ since $M'_{n} $ are submodules of $M'$ for all $n$. This implies $\phi '_{n} \left(f\left(k\right)\right)=\phi '_{n} f\left(k\right)=M'_{n}$ is the zero of $\frac{M'} {M'_{n} }$. Since the diagram (\ref{d1}) is commutative, therefore, $f_{n} \phi _{n} \left(k\right)=\phi'_{n} f\left(k\right)=M'_{n}$. This implies that $\phi _{n} \left(k\right)\in \Ker f_{n}$.
Let $\alpha _{n} :\Ker f\to \Ker f_{n} $  be defined as:
$$\alpha _{n} \left(k\right)=\phi _{n} \left(k\right)$$

\noindent Indeed, $\alpha _{n} $ is a well defined $X-$homomorphism, since $\phi _{n} $ is a well defined $X-$hom-omorphism. This completes the proof.

\end{proof}
The following theorem presents the necessary and sufficient condition for a compatible mapping to be strict.

\begin{thm}\label{thm2}{\em
Let $\Im$ and $\Im^ \prime$ be Baig topologies on $X-$modules $M$ and $M^ \prime$ admitted by {\em dss} $\{M_n\vert M_n\subseteq M_{n+1}\}_{n\in \mathbb{Z}^+}$ and $\{M^\prime_n\vert M^\prime_n\subseteq M^\prime_{n+1}\}_{n\in \mathbb {Z}^+}$  respectively. Then a compatible $X-$homomorphism $f:M\to M^\prime$ is strict if and only if for all $n\in{\mathbb Z}^+$ the associated mapping $\alpha_n:\Ker f \to \Ker f_n$ is an $X-$epimorphism.
}\end{thm}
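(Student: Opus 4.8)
The plan is to show that the two conditions in the statement — strictness of $f$, and surjectivity of $\alpha_n$ for every $n$ — are each equivalent to the single family of identities
\[
f^{-1}(M'_n)=\Ker f+M_n\qquad (\forall\, n\in\mathbb{Z}^+),
\]
and then conclude they are equivalent to one another. Since $f$ is assumed compatible, two inclusions come for free throughout: $f(M_n)\subseteq M'_n$ gives $f(M_n)\subseteq f(M)\cap M'_n$, and for $k\in\Ker f$, $m\in M_n$ one has $f(k+m)=f(m)\in M'_n$ by Proposition \ref{prp3}, so $\Ker f+M_n\subseteq f^{-1}(M'_n)$. Thus in each equivalence only one inclusion carries content, and the work is just to match it up with the displayed identity.

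First I would unwind $\Ker f_n$. Because $M'_n$ is the zero of $\frac{M'}{M'_n}$ and $f_n(m+M_n)=f(m)+M'_n$, we get $m+M_n\in\Ker f_n\iff f(m)\in M'_n$, i.e.\ $\Ker f_n=\phi_n\big(f^{-1}(M'_n)\big)$. From the construction in Corollary \ref{crthm1}, $\alpha_n(k)=\phi_n(k)$, so $\Imm\alpha_n=\phi_n(\Ker f)$, and $\alpha_n$ is an $X$-epimorphism precisely when $\phi_n(\Ker f)=\phi_n\big(f^{-1}(M'_n)\big)$. Since $\phi_n$ is surjective with $\Ker\phi_n=M_n$, applying $\phi_n^{-1}$ to both sides (using $\phi_n^{-1}(\phi_n(S))=S+M_n$ for any subset $S$, and $M_n\subseteq f^{-1}(M'_n)$) turns this into $\Ker f+M_n=f^{-1}(M'_n)$. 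So surjectivity of every $\alpha_n$ is equivalent to the displayed identities.

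Next I would do the same for strictness. Using compatibility, $f$ is strict iff $f(M)\cap M'_n\subseteq f(M_n)$ for all $n$. If this holds and $m\in f^{-1}(M'_n)$, then $f(m)\in f(M)\cap M'_n\subseteq f(M_n)$, so $f(m)=f(m')$ for some $m'\in M_n$, whence $m=(m-m')+m'\in\Ker f+M_n$; conversely, if $f^{-1}(M'_n)\subseteq\Ker f+M_n$ and $y=f(m)\in f(M)\cap M'_n$, then $m\in\Ker f+M_n$, say $m=k+m'$ with $k\in\Ker f$, $m'\in M_n$, and then $y=f(m)=f(m')\in f(M_n)$. Hence strictness of $f$ is equivalent to $f^{-1}(M'_n)=\Ker f+M_n$ for every $n$.

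Combining the two paragraphs yields the theorem. All steps are routine diagram-chasing once $\Ker f_n$ and $\Imm\alpha_n$ are identified with the relevant images and preimages of the submodules in play; the only mildly delicate point is the set-theoretic bookkeeping $\phi_n^{-1}(\phi_n(S))=S+M_n$ together with the systematic use of compatibility to discard the "easy" inclusions, so I expect the main (and very minor) obstacle to be presenting these manipulations cleanly rather than any genuine mathematical difficulty.
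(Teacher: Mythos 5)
Your proof is correct and rests on the same underlying manipulations as the paper's: both directions come down to the facts that $\Ker\phi_n=M_n$, that $\phi_n$ is onto, and that an element of $f^{-1}(M'_n)$ splits as (kernel element) $+$ (element of $M_n$) exactly when $f(M)\cap M'_n\subseteq f(M_n)$. The only difference is organizational --- you route both equivalences through the explicit intermediate identity $f^{-1}(M'_n)=\Ker f+M_n$, which the paper leaves implicit in its direct element chase --- so this is essentially the same argument, cleanly repackaged.
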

\begin{proof}
Let $f:M\to M'$ be strict. Then
$f\left(M_{n} \right)=f\left(M\right)\cap M'_{n} \; \forall n\in {\mathbb Z}^{+} $. We will shwo that $\alpha_n$ is an epimorphism. Let $k'\in \Ker f_{n}$. Then $ f_{n} \left(k'\right)=0\; ;k'\in M $. Since $\phi _{n}$ is surjective, therefore, there exists some $k\in M$ such that $f_{n} \phi _n \left(k\right)=0$. Therefore, from Theorem \ref{thm1} $\phi '_{{\rm n}} f\left(k\right)=M'_{n}$ and hence $f\left(k\right)\in \Ker\phi '_n =M'_{n}$. This implies $f\left(k\right)\in f\left(M\right)\cap M'_{n} =f\left(M_{n} \right)$. Implies there exists $ m_{n} \in M_{n}$ such that $f\left(k\right)=f\left(m_{n} \right)$. Therefore, $k-m_{n} \in Kerf$ and from Corollary \ref{crthm1}, simple calculations reveal that $\phi _{n} \left(k-m_{n} \right)=\phi _{n} \left(k\right)=k'$. Since $k'$ was arbitrary, therefore, $\alpha_n$ is an $X-$epimorphism.

Conversely, assume that $\alpha _{n} :\Ker f\to \Ker f_{n} $be an $X-$epimorphism, we will show that $f$ is strict. Indeed, the compatibility of $f$ implies $f\left(M_{n} \right)\subseteq f\left(M\right)\cap M'_{n}$. For reverse inclusion,
 let  $x'\in f\left(M\right)\cap M'_{n} $. Then there exists $x\in M$ such that $f\left(x\right)=x'$. Now the commutativity of diagram \ref{d1} implies that $\phi _{n} \left(x\right)\in Kerf_{n}$.
Since $\alpha _{n} :\Ker f\to \Ker f_{n} $ is an $X-$epimorphism, therefore, there exists $ y\in \Ker f$ such that
$\alpha _{n} \left(y\right)=\phi _{n} \left(x\right)$. Now from Corollary \ref{crthm1}, it implies that $\alpha _{n} \left(y\right)=\phi _{n} \left(y\right)$. This further implies that $x-y\in \Ker\phi _{n} =M_{n}$. Also $f\left(x-y\right)=f\left(x\right)-f\left(y\right)=f\left(x\right)=x'\; \because y\in \Ker f$. This implies $f\left(x-y\right)=x'\in f\left(M_{n} \right)\; \because x-y\in M_{n}$. Hence, $f\left(M\right)\cap M'_{n} \subseteq f\left(M_{n} \right)$ Consequently, $f\left(M\right)\cap M'_{n} = f\left(M_{n} \right)$ by Definition \ref{df-cs}. This completes the proof.
%
\end{proof}

\begin{cor}{\em
If in the last proposition $\alpha_n:\Ker f \to \Ker f_n$ is an $X-$epimorphism, then $f$ is a continuous as well as an open mapping.
}\end{cor}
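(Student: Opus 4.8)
The plan is to reduce the statement entirely to results already proved, since the hypothesis places us squarely in the situation of Theorem~\ref{thm2}. We are given a \emph{compatible} $X$-homomorphism $f:M\to M'$ (the compatibility is part of the hypothesis of ``the last proposition'', i.e. Theorem~\ref{thm2}) together with the assumption that $\alpha_n:\Ker f\to\Ker f_n$ is an $X$-epimorphism for every $n\in\mathbb{Z}^+$. The first step is simply to apply Theorem~\ref{thm2}: under exactly these hypotheses it concludes that $f$ is strict.

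Once strictness is established, the two desired conclusions follow from the corresponding structural results already in hand. For the openness of $f$, I would invoke Proposition~\ref{st-op}, which asserts that every strict $X$-homomorphism between Baig-topologized $X$-modules is an open mapping. For the continuity of $f$, I would invoke Corollary~\ref{st-cnt}; unfolding it, this is the chain strictness $\Rightarrow$ compatibility (Proposition~\ref{st-cp}) $\Rightarrow$ continuity everywhere (Proposition~\ref{cp-cnt}), the latter relying on Proposition~\ref{prp0} to pass from continuity at $0$ to continuity on all of $M$. Putting these together gives that $f$ is both continuous and open, which is the claim.

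There is essentially no technical obstacle here; the corollary is a bookkeeping consequence of the preceding development. The one point that deserves a moment's care is to make sure the compatibility hypothesis on $f$ is genuinely available: the epimorphism condition on the $\alpha_n$ alone does not suffice, since Theorem~\ref{thm2} (and hence the implication to strictness) is stated only for compatible homomorphisms. Provided this is read as inherited from the ``last proposition,'' no new computation is needed, and the proof is a two-line citation of Theorem~\ref{thm2}, Proposition~\ref{st-op}, and Corollary~\ref{st-cnt}.
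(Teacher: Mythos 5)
Your proposal is correct and follows exactly the paper's own argument: apply Theorem~\ref{thm2} to conclude that $f$ is strict, then cite Corollary~\ref{st-cnt} for continuity and Proposition~\ref{st-op} for openness. Your added remark that compatibility must be inherited from the hypotheses of Theorem~\ref{thm2} is a sensible clarification but does not change the route.
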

\begin{proof}
 Indeed, in this case Theorem \ref{thm2} implies $f$ is strict and therefore, from Corollary \ref{st-cnt} and Proposition \ref{st-op}, it follows that $f$ is continuous and open respectively.
\end{proof}

\section{A direction to explore for BCK-modules}
The theory developed above can be further explored in different directions. One such direction is to discuss the completeness of BCK-module w.r.t. the Baig Topology. Some interesting notions and results have already been established regarding the necessary and sufficient condition of completeness of a BCK-modules. These results will be communicated after some further refinement and addition in  near future.

\end{document}